\newtheorem{thm}{Theorem}
\newtheorem{lem}{Lemma}
\theoremstyle{remark}
\newtheorem{rem}{Remark}
\theoremstyle{definition}
\newtheoremstyle{notes}% name
{3pt}% Space above
{3pt}% Space below
{}% Body font
{}% Indent amount
{\bfseries}% Theorem head font
{:}% Punctuation after theorem head
{.5em}% Space after theorem head
{}% Theorem head spec (can be left empty, meaning normal)
\theoremstyle{notes}
\newtheorem*{keywords}{Keywords}
\newtheorem*{subjclass}{AMS MSC 2010}
\newtheorem*{acknowledgments}{Acknowledgments}
\newcommand{\D}[1]{\mathop{\mathrm{d}#1}}
\title{Independent factorization of the last zero\\ arcsine law for Bessel processes with drift}
\author{Hugo Panzo\footnote{Supported at the Technion by a Zuckerman Fellowship.} \\ 
\href{mailto:panzo@campus.technion.ac.il}{\texttt{{\small panzo@campus.technion.ac.il}}}}
\date{\today}
\begin{document}

\maketitle

\begin{abstract}
We show that the last zero before time $t$ of a recurrent Bessel process with drift starting at $0$ has the same distribution as the product of an independent right censored exponential random variable and a beta random variable. This extends a recent result of Schulte-Geers and Stadje \cite{ESG} from Brownian motion with drift to recurrent Bessel processes with drift. Our proof is intuitive and direct while avoiding heavy computations. For this we develop a novel additive decomposition for the square of a Bessel process with drift that may be of independent interest.
\end{abstract}

\begin{keywords}
Bessel processes with drift, arcsine law, time inversion property, additivity property, last exit time.
\end{keywords}

\begin{subjclass}
Primary 60G17, 60J60; Secondary 60J65.
\end{subjclass}

\section{Introduction}\label{sec:intro}

Let $X=(X_t:t\geq 0)$ denote the coordinate process on the canonical space of continuous functions from $[0,\infty)$ to $\mathbb{R}$ and let $\mathbb{W}_x$ denote the law under which $X$ is Brownian motion starting at $x\in\mathbb{R}$. We write $g_t$ for the last zero of $X$ before time $t$. More precisely, $g_t=\sup\{s\leq t:X_s=0\}$ with the usual convention of $\sup\{\emptyset\}=-\infty$. The well-known arcsine law for $g_t$ is due to L\'{e}vy \cite[\S~7.7]{Levy} and states that
\begin{equation}\label{eq:arcsine}
g_t\text{ under }\mathbb{W}_0\,\stackrel{\mathcal{L}}{=}\, tA_\frac{1}{2}
\end{equation}
where we used $\stackrel{\mathcal{L}}{=}$ to indicate equality in law and $A_\frac{1}{2}$ is a $\mathrm{Beta}(\frac{1}{2},\frac{1}{2})$ random variable with density
\[
P(A_\frac{1}{2}\in\D{x})=\frac{1}{\pi\sqrt{x(1-x)}}\mathbbm{1}_{(0,1)}(x)\D{x}.
\]

There have been many extensions of \eqref{eq:arcsine} to processes besides $1$-dimensional Brownian motion, see Chapter 8 of \cite{aspects} and Section 2.5 of \cite{Pitman_arcsine} as well as references therein for some examples. One such generalization that plays an important role in this paper is due to Lamperti \cite[Theorem 5.1]{Lamperti_arcsine} and identifies the distribution of $g_t$ when the underlying Brownian motion is replaced by a recurrent Bessel process of dimension $\delta\in (0,2)$. We use $\mathbb{P}_x^\delta$ to denote the law of this Bessel processes when it starts at $x\geq 0$. Lamperti showed that 
\begin{equation}\label{eq:Bessel_arcsine}
g_t\text{ under }\mathbb{P}^\delta_0\,\stackrel{\mathcal{L}}{=}\, t A_\alpha
\end{equation}
where $\alpha=1-\frac{\delta}{2}\in (0,1)$ and $A_\alpha$ is a $\mathrm{Beta}(\alpha,1-\alpha)$ random variable with density
\[
P(A_\alpha\in\D{x})=\frac{x^{\alpha-1}(1-x)^{-\alpha}}{\Gamma(\alpha)\Gamma(1-\alpha)}\mathbbm{1}_{(0,1)}(x)\D{x}.
\]
\begin{comment}
Dynkin '61 shows that the undershoot of a stable subordinator has the generalized arcsine law. To get the last zero of a recurrent Bessel process from this you need Molchanov-Ostrovski '69.
\end{comment}
L\'{e}vy's last zero arcsine law \eqref{eq:arcsine} can be seen as a special case of \eqref{eq:Bessel_arcsine} since the law of $|X|$ under $\mathbb{W}_0$ is the same as the law of $X$ under $\mathbb{P}_0^1$.

Another more recent generalization that is central to our results is that of characterizing the law of $g_t$ when a constant drift $\mu\in\mathbb{R}$ is added to the underlying Brownian motion. We use $\mathbb{W}_x^\mu$ to denote the law of Brownian motion with constant drift $\mu$ when it starts at $x\in\mathbb{R}$. Using a random walk approximation argument, Schulte-Geers and Stadje \cite[Theorem 2.1]{ESG} show that $g_t$ under $\mathbb{W}_0^\mu$ has the independent factorization
\begin{equation}\label{eq:ESG}
g_t\text{ under }\mathbb{W}^\mu_0\,\stackrel{\mathcal{L}}{=}\, \min\{t,E_\mu\}\, A_\frac{1}{2}
\end{equation}
where $E_\mu$ is an $\mathrm{Exp}(\frac{1}{2}\mu^2)$ random variable independent of $A_\frac{1}{2}$ and with density
\[
P(E_\mu\in\D{t})=\frac{1}{2}\mu^2 e^{-\frac{1}{2}\mu^2 t}\mathbbm{1}_{[0,\infty)}(t)\D{t}.
\] 

Despite the elegant and elementary nature of \eqref{eq:ESG}, it seems to have escaped notice until \cite{ESG}, see also \cite[Remark 2.1]{Iafrate}. The present author first learned of this characterization from a MathOverflow answer \cite{MathOverflow}, presumably written by the first author of \cite{ESG}. Another attractive feature of \eqref{eq:ESG} is that it allows us to easily recover L\'{e}vy's last zero arcsine law \eqref{eq:arcsine} since $\min\{t,E_\mu\}$ degenerates to $t$ as $\mu\to 0$. Moreover, since the last exit time from $0$ is almost surely finite when $\mu\neq 0$, we can also recover the law of $g_\infty$ in this case. Letting $t\to\infty$ in \eqref{eq:ESG} shows that $g_\infty$ under $\mathbb{W}_0^\mu$ is distributed like $E_\mu\,A_\frac{1}{2}$. After recalling the beta-gamma algebra \cite[Chapter 0.6]{Revuz_Yor}, it follows that $g_\infty$ under $\mathbb{W}_0^\mu$ is a $\mathrm{Gamma}(\frac{1}{2},\frac{1}{2}\mu^2)$ random variable with density 
\[
\mathbb{W}_0^\mu(g_\infty\in\D{t})=\frac{|\mu|}{\sqrt{2\pi t}}e^{-\frac{1}{2}\mu^2 t}\mathbbm{1}_{(0,\infty)}(t)\D{t}.
\]

While verifying \eqref{eq:ESG} directly using a Girsanov measure change argument is a straightforward matter, tedious calculations are required as can be witnessed in Section 2 of \cite{Iafrate} where this is carried out in detail. Indeed, in \cite[Remark 2.3]{ESG}, the authors appeal for a ``purely Brownian'' explanation of the independent factorization \eqref{eq:ESG}. This leads us to the main contributions of this paper:
\begin{enumerate}
\item Extending \eqref{eq:ESG} to Bessel processes of dimension $\delta\in(0,2)$ with positive drift (in the sense of Watanabe \cite{wide_sense}), thereby unifying the last zero arcsine laws for Brownian motion \eqref{eq:arcsine} and recurrent Bessel processes \eqref{eq:Bessel_arcsine} under the independent factorization framework of \eqref{eq:ESG} when drift is present. 
\item Giving a ``purely Bessel'' explanation for the independent factorization \eqref{eq:ESG} and the aforementioned extension to Bessel processes with drift that is intuitive, direct, and avoids heavy computation. For this we develop an additive decomposition for the square of a Bessel process with drift which appears to be new and may be of independent interest.
\end{enumerate}

The remainder of the paper is organized as follows. In Section \ref{sec:main_results} we recall the definition of Bessel processes with drift and state our main theorems. In Section \ref{sec:preliminaries}, we review several relevant properties of Bessel processes and bridges and prove some preliminary results. Finally, the main theorems are proved in Section \ref{sec:proofs} where we also outline an alternative proof in Section \ref{sec:last_zero} and discuss how it compares to the proof we give.

\section{Main results}\label{sec:main_results}

\subsection{Bessel processes with drift}

Before stating our main results, we first recall the \emph{Bessel processes with drift} introduced by Watanabe \cite{wide_sense}. These comprise a two-parameter family of diffusions on $[0,\infty)$ that are indexed by their dimension $\delta>0$ and drift $\mu\geq 0$. Let $I_\nu$ denote the modified Bessel function of the first kind and for $\mu>0$ define the function $h_{\delta,\mu}:[0,\infty)\to[1,\infty)$ by
\begin{equation*}
h_{\delta,\mu}(x)=\begin{cases}
1&x=0\\
\left(\frac{2}{\mu\, x}\right)^{\frac{\delta}{2}-1}\Gamma\left(\frac{\delta}{2}\right)I_{\frac{\delta}{2}-1}(\mu\, x)&x>0.
\end{cases}
\end{equation*}
When $\mu>0$, these processes are determined by the generator
\begin{equation}\label{eq:generator}
L^{\delta,\mu}=\frac{1}{2}\frac{\D{}^2}{\D{x}^2}+\left(\frac{\delta-1}{2x}+\frac{h'_{\delta,\mu}(x)}{h_{\delta,\mu}(x)}\right)\frac{\D{}}{\D{x}}
\end{equation}
with $0$ being a regular boundary with instantaneous reflection if $0<\delta<2$ or an entrance boundary if $\delta\geq 2$. When $\mu=0$, they coincide with the usual Bessel processes having the same dimension. Accordingly, we use $\mathbb{P}_x^{\delta,\mu}$ to denote the law of a Bessel process with dimension $\delta$ and drift $\mu$ that starts from $x\geq 0$. By writing $\mathbb{P}_x^\delta$ instead of $\mathbb{P}_x^{\delta,0}$, this notation subsumes that of Bessel processes without drift from Section \ref{sec:intro}. We will also work with \emph{squared Bessel processes with drift} and will use $\mathbb{Q}_x^{\delta,\mu}$ to denote the law of these processes. More precisely, 
\begin{equation}\label{eq:Bessel_square}
\left(X_t:t\geq 0;\,\mathbb{Q}_x^{\delta,\mu}\right)\stackrel{\mathcal{L}}{=}\left(X_t^2:t\geq 0;\,\mathbb{P}_{\sqrt{x}}^{\delta,\mu}\right).
\end{equation}

The appearance of the logarithmic derivative of $h_{\delta,\mu}$ in the first-order term of \eqref{eq:generator} along with the fact that $L^{\delta,0}\,h_{\delta,\mu}=\frac{1}{2}\mu^2\, h_{\delta,\mu}$ implies that a Bessel process of dimension $\delta$ with drift $\mu$ is simply a Bessel process of the same dimension without drift killed at rate $\frac{1}{2}\mu^2$ and then $h$-transformed by $h_{\delta,\mu}$. In particular, for any fixed $t\geq 0$, this gives the absolute continuity relation 
\begin{equation}\label{eq:h_transform}
\D{\mathbb{P}_x^{\delta,\mu}}\big|_{\mathcal{F}_t}=e^{-\frac{1}{2}\mu^2 t}\frac{h_{\delta,\mu}(X_t)}{{h_{\delta,\mu}(x)}}\D{\mathbb{P}_x^\delta}\big|_{\mathcal{F}_t}
\end{equation}
where we used $(\mathcal{F}_t:t\geq 0)$ to denote the canonical filtration. Refer to \cite[Section 4.1]{Pinsky} for the requisite theory on $h$-transforms of diffusion processes.

As remarked upon in \cite{infinite_divis}, the name Bessel process with drift is appropriate since for $\delta\in\mathbb{N}$, the law of $X$ under $\mathbb{P}_0^{\delta,\mu}$ is the same as that of the modulus of Brownian motion in $\mathbb{R}^\delta$ starting at $0$ with a constant drift vector of magnitude $\mu$, see \cite[Theorem 3]{Markov_functions}. Moreover, by the corollary to \cite[Theorem 2.1]{wide_sense}, we have for any $\delta>0$ and $x,\mu\geq0$
\begin{equation}\label{eq:limit_speed}
\mathbb{P}_x^{\delta,\mu}\left(\lim_{t\to\infty}\frac{X_t}{t}=\mu\right)=1.
\end{equation}

Interest in these processes is motivated in part by their being, up to a scale factor, the only regular and conservative diffusions on $[0,\infty)$ that satisfy the \emph{time inversion property} \cite{wide_sense, Lawi}, see Section \ref{sec:time_inversion}. Additionally, when $\delta=3$ they make an appearance in Williams' path decomposition for Brownian motion with drift \cite{Williams_decomp, Bessel_sup}. When $\delta=3$ and $\mu=1$, they also coincide with the hyperbolic Bessel process of dimension $3$, see \cite{hyper_Bessel, inversions}. We note that they are distinct from the Bessel processes with constant ``naive drift'' which arise in studies of bird navigation and queueing theory, see \cite{naive_drift,Linetsky} and references therein.

\subsection{Main theorems}

Our first result is an additive decomposition of $(X_t:t\geq 0)$ under $\mathbb{Q}_0^{\delta,\mu}$ into two independent processes that start at $0$: one being a squared Bessel process of dimension $\delta$ without drift and the other being a squared Bessel process of dimension $4$ with drift $\mu$ which waits for an independent $\mathrm{Exp}(\frac{1}{2}\mu^2)$ time before starting. Such a decomposition was alluded to by Pitman and Yor in their Remark 5.8.iii of \cite{Bessel_decomp} but as far as we know, no explicit statement has ever appeared in the literature. A random waiting time before starting has featured in a similar additive decomposition for a squared Bessel process of dimension $2$ without drift that appears in Section 3.5.1 of \cite{aspects}. We adopt Mansuy and Yor's notation which makes use of the positive part $x^+:=\max\{0,x\}$.
\begin{thm}\label{thm:decomp}
Suppose $\delta,\mu>0$ and let $E_\mu$ be an independent $\mathrm{Exp}(\frac{1}{2}\mu^2)$ random variable. Then we have
\[
\left(X_t:t\geq 0;\,\mathbb{Q}_0^{\delta,\mu}\right)\stackrel{\mathcal{L}}{=}\left(X_t+X_{(t-E_\mu)^+}':t\geq 0;\,\mathbb{Q}_0^\delta(X)\times\mathbb{Q}_0^{4,\mu}(X')\right)
\]
where the product law notation $\mathbb{Q}_0^\delta(X)\times\mathbb{Q}_0^{4,\mu}(X')$ indicates that $X$ and $X'$ are independent with laws $\mathbb{Q}_0^\delta$ and $\mathbb{Q}_0^{4,\mu}$, respectively.
\end{thm}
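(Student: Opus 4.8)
The plan is to establish the identity in law by matching the Laplace transforms of all finite-dimensional distributions; since both sides are almost surely continuous in $t$ (note that $t\mapsto X'_{(t-E_\mu)^+}$ is continuous because $X'$ starts at $0$), this determines the two process laws. Fix times $0<t_1<\cdots<t_n=t$ and test against functionals of the form $F=\exp(-\sum_i\lambda_i X_{t_i})$ with $\lambda_i\ge0$.

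For the left-hand side I would use the absolute continuity relation \eqref{eq:h_transform} at $x=0$ (where $h_{\delta,\mu}(0)=1$) to write $E_{\mathbb{Q}_0^{\delta,\mu}}[F]=e^{-\frac{1}{2}\mu^2 t}\,E_{\mathbb{Q}_0^{\delta}}\!\big[F\,h_{\delta,\mu}(\sqrt{X_t})\big]$, and then expand $h_{\delta,\mu}(\sqrt{y})=\sum_{k\ge0}\frac{(\mu^2 y/4)^k}{k!\,(\delta/2)_k}$ from the series for $I_{\delta/2-1}$, where $(\cdot)_k$ is the Pochhammer symbol. Combined with the moment identity $E_{\mathbb{Q}_0^{\delta}}[X_t^k]=(2t)^k(\delta/2)_k$, the mixing weights collapse to a Poisson$(\tfrac{1}{2}\mu^2 t)$ law, exhibiting $\mathbb{Q}_0^{\delta,\mu}$ on $\mathcal{F}_t$ as a Poisson mixture of the $X_t^k$-weighted versions of $\mathbb{Q}_0^{\delta}$. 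On the right-hand side, independence factors the Laplace transform into the plain BESQ$(\delta)$ transform times a drift factor $E[\exp(-\sum_i\lambda_i X'_{(t_i-E_\mu)^+})]$, which I would evaluate by conditioning on $E_\mu$ and invoking the Markov property of $X'$.

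The crux is to show these drift factors agree, equivalently that the ratio of Laplace functionals $\mathbb{Q}_0^{\delta,\mu}/\mathbb{Q}_0^{\delta}$ is independent of $\delta$ and coincides with the delayed BESQ$(4,\mu)$ contribution. The mechanism is transparent already for $n=1$: conditioning on $E_\mu$ and substituting $u=t-s$ gives $e^{-\frac{1}{2}\mu^2 t}\big[1+\frac{\mu^2}{2}\int_0^t(1+2\lambda u)^{-2}\exp(\frac{\mu^2 u}{2(1+2\lambda u)})\,du\big]$, and the further substitution $v=u/(1+2\lambda u)$, with $dv=(1+2\lambda u)^{-2}\,du$, telescopes the integral to $\frac{2}{\mu^2}\big(e^{\mu^2 t/(2(1+2\lambda t))}-1\big)$. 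The bracket then collapses to $\exp(\frac{\mu^2 t}{2(1+2\lambda t)})$, yielding the drift factor $\exp(-\frac{\mu^2\lambda t^2}{1+2\lambda t})$, exactly what the $h$-transform computation produces; both sides therefore equal $(1+2\lambda t)^{-\delta/2}\exp(-\frac{\mu^2\lambda t^2}{1+2\lambda t})$, the transform of $t\,\chi^2_\delta(\mu^2 t)$. It is precisely dimension $4$ that makes $(1+2\lambda u)^{-\delta'/2}\,du$ an exact differential, and this exact-differential identity is the structural reason for the theorem.

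I expect the main obstacle to be promoting this single-time telescoping to the full finite-dimensional statement, where the endpoint weight $h_{\delta,\mu}(\sqrt{X_t})$ becomes entangled with the intermediate coordinates. I would handle this in one of two ways: either invoke the Pitman--Yor closed form for the BESQ Laplace functional, in which the dependence on $\delta$ is an explicit exponential so that the drift factor drops out as a manifestly $\delta$-free term governed by the associated Riccati (Sturm--Liouville) equation, with the exact-differential identity reappearing as the ODE computation; or argue inductively in $n$ via the Markov property, which is legitimate on the right-hand side because the memorylessness of $E_\mu$ makes the delayed component $X'_{(\cdot-E_\mu)^+}$ itself a Markov process. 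Verifying that the dimension-$4$ cancellation survives at this functional level is the one genuinely delicate point.
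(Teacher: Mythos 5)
Your single-time computation is correct: the substitution $v=u/(1+2\lambda u)$ does telescope the drift factor to $\exp\bigl(-\mu^2\lambda t^2/(1+2\lambda t)\bigr)$, both sides do have the noncentral chi-square marginal $t\chi^2_\delta(\mu^2 t)$, and the observation that dimension $4$ is exactly what makes $(1+2\lambda u)^{-\delta'/2}\D{u}$ an exact differential is a genuine insight. But the theorem asserts equality of \emph{process} laws, and what you have actually proved is only equality of the one-dimensional marginals. The entire content of the statement lives in the finite-dimensional distributions for $n\geq 2$, and there your argument stops at two proposed strategies, neither carried out. The first (Pitman--Yor Laplace functionals) is not routine because the left-hand side is not a pure BESQ functional: the $h$-transform weight $h_{\delta,\mu}(\sqrt{X_{t_n}})$ is a Bessel-function factor attached to the terminal coordinate whose series expansion, once entangled with the intermediate coordinates $X_{t_1},\dots,X_{t_{n-1}}$, no longer collapses to a visibly $\delta$-free Poisson mixture; showing that the ratio of functionals is $\delta$-free and equals the delayed $\mathrm{BESQ}(4,\mu)$ contribution \emph{is} the theorem, now in a harder analytic disguise. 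The second (induction via the Markov property) has a concrete obstruction you pass over: the delayed component $X'_{(\cdot-E_\mu)^+}$ is indeed Markov by memorylessness, but the \emph{sum} of two independent Markov processes is not Markov in general, so to run the induction you must control the conditional law of the pair $\bigl(X_{t},X'_{(t-E_\mu)^+}\bigr)$ given the sum --- an intertwining statement essentially equivalent to the additivity you are trying to prove. So there is a genuine gap: the passage from $n=1$ to general $n$ is not a verification you can defer, it is the heart of the matter.

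For comparison, the paper's proof avoids transform computations entirely. It maps $X$ under $\mathbb{P}_0^{\delta,\mu}$ by a deterministic space-time change to a $\delta$-dimensional Bessel bridge of length $1$ from $0$ to $\mu$ (Lemma \ref{lem:bridges}.ii), splits the squared bridge by the known additivity of squared Bessel bridges \eqref{eq:bridge_additivity} into an independent $\delta$-dimensional bridge to $0$ plus a $0$-dimensional bridge to $\mu$, and then undoes the space-time change on each summand; the $0$-dimensional bridge, conditioned on its last zero, is a $4$-dimensional bridge (time reversal of an absorbed path), which is where both the dimension $4$ and the exponential delay come from (Lemma \ref{lem:zero_bridge}). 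Your exact-differential identity is the analytic shadow of that structural fact. If you want to complete your route, the first option is the more promising one, but the Sturm--Liouville computation with the terminal weight must be carried out in full; as written, the proposal is incomplete.
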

\begin{rem}
The piecewise nature of the second summand distinguishes the additivity exhibited in Theorem \ref{thm:decomp} from the usual kind described in Section \ref{sec:additivity}. A similarly exotic form of additivity for squared Bessel processes without drift and with possibly negative dimensions can be found in \cite[Proposition 1.1]{Pitman_squared}.
\end{rem}

Theorem \ref{thm:decomp} allows us to give a quick and intuitive proof of the independent factorization of the last zero arcsine law \eqref{eq:ESG} and its generalization to Bessel processes with drift which we state below as Theorem \ref{thm:last_zero}. 
\begin{thm}\label{thm:last_zero}
Let $g_t=\sup\{s\leq t:X_s=0\}$ be the last zero before time $t$ of a Bessel process with dimension $\delta\in(0,2)$ and drift $\mu>0$ starting at $0$. Put $\alpha=1-\frac{\delta}{2}$ and let $A_\alpha$ and $E_\mu$ be independent $\mathrm{Beta}(\alpha,1-\alpha)$ and $\mathrm{Exp}(\frac{1}{2}\mu^2)$ random variables, respectively. Then we have the independent factorization
\[
g_t\text{ under }\,\mathbb{P}^{\delta,\mu}_0\,\stackrel{\mathcal{L}}{=}\,\min\{t,E_\mu\}\, A_\alpha.
\]
\end{thm}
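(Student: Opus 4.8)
The plan is to reduce everything to the additive decomposition of Theorem~\ref{thm:decomp} and then observe that the delayed dimension-$4$ summand acts as a permanent barrier that freezes the last zero. First I would pass from the Bessel process to its square: since squaring does not alter the zero set, the relation \eqref{eq:Bessel_square} shows that $g_t$ under $\mathbb{P}_0^{\delta,\mu}$ has the same law as $g_t$ under $\mathbb{Q}_0^{\delta,\mu}$. This lets me work with squared Bessel processes, which is exactly the setting in which Theorem~\ref{thm:decomp} applies.

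Next I would realize $\mathbb{Q}_0^{\delta,\mu}$ through the decomposition $Y_t = X_t + X'_{(t-E_\mu)^+}$, where $X$ is a squared Bessel process of dimension $\delta$ without drift, $X'$ is an independent squared Bessel process of dimension $4$ with drift $\mu$, and $E_\mu$ is an independent $\mathrm{Exp}(\frac{1}{2}\mu^2)$ time. The crucial structural fact is that $4\geq 2$, so for $X'$ the origin is an entrance boundary that is never revisited; hence $X'_s>0$ for every $s>0$. Consequently the delayed copy $X'_{(t-E_\mu)^+}$ vanishes for $t\leq E_\mu$ and is strictly positive for $t>E_\mu$. Because $X$ and $X'$ are nonnegative, this gives $Y_t=X_t$ on $[0,E_\mu]$ and $Y_t>0$ on $(E_\mu,\infty)$. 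Therefore the zeros of $Y$ up to time $t$ are exactly the zeros of $X$ up to time $\min\{t,E_\mu\}$, and I obtain the pathwise identity
\[
g_t(Y)=g_{\min\{t,E_\mu\}}(X),
\]
where $g_\cdot(\cdot)$ denotes the last-zero functional applied to the indicated path.

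It then remains to identify the law of the right-hand side. Since $\delta\in(0,2)$, the process $X$ is recurrent and, by Lamperti's arcsine law \eqref{eq:Bessel_arcsine} together with the Brownian scaling property of squared Bessel processes, $g_s(X)\stackrel{\mathcal{L}}{=}s\,A_\alpha$ for each fixed $s>0$; equivalently, $g_s(X)/s$ has the fixed law of $A_\alpha$ for all $s$. As $E_\mu$ is independent of $X$, I would condition on $E_\mu$: for $E_\mu=e$ the time $\min\{t,e\}$ is deterministic, so scaling gives $g_{\min\{t,e\}}(X)\stackrel{\mathcal{L}}{=}\min\{t,e\}\,A_\alpha$, and integrating against the law of $E_\mu$ produces $g_t(Y)\stackrel{\mathcal{L}}{=}\min\{t,E_\mu\}\,A_\alpha$ with $A_\alpha$ independent of $E_\mu$. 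Chaining this with the two equalities in law from the first two steps yields the claimed factorization.

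The main obstacle I anticipate is justifying the barrier property in the second step, namely that the dimension-$4$ summand, once released at time $E_\mu$, stays strictly positive and so permanently caps the set of zeros at $\min\{t,E_\mu\}$; this is precisely where the specific dimension $4\geq 2$ (and the associated non-attainability of $0$) is used, and it is what converts the additive decomposition into a statement about last zeros. The remaining steps are routine once the scaling property of $g_s(X)$ and the independence of $E_\mu$ are in place.
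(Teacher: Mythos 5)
Your proposal is correct and follows essentially the same route as the paper's proof: it reduces to the squared process, invokes the additive decomposition of Theorem~\ref{thm:decomp}, uses the fact that the delayed dimension-$4$ summand never returns to $0$ to cap the zero set at $\min\{t,E_\mu\}$, and then applies scaling together with Lamperti's arcsine law \eqref{eq:Bessel_arcsine}. The only cosmetic difference is that you condition on $E_\mu$ and integrate, whereas the paper factors $\min\{t,E_\mu\}$ out of the supremum by Bessel scaling before invoking Lamperti's law at time $1$; these are equivalent.
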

\begin{rem}
As mentioned before, $|X|$ under $\mathbb{W}_0^\mu$ has the same law as $X$ under $\mathbb{P}_0^{1,|\mu|}$ so it follows that \eqref{eq:ESG} is a special case of Theorem \ref{thm:last_zero}.
\end{rem}
\begin{rem}
We can recover Lamperti's arcsine law \eqref{eq:Bessel_arcsine} by letting $\mu\to 0$. Similarly, letting $t\to\infty$ and appealing to the beta-gamma algebra shows that $g_\infty$ under $\mathbb{P}^{\delta,\mu}_0$ is a $\mathrm{Gamma}(\alpha,\frac{1}{2}\mu^2)$ random variable, cf. \cite[Section 7]{infinite_divis}.
\end{rem}
\begin{comment}
with density 
\[
\mathbb{P}_0^{\delta,\mu}(g_\infty\in\D{t})=\left(\frac{1}{2}\mu^2\right)^\alpha\frac{t^{\alpha-1}}{\Gamma(\alpha)}e^{-\frac{1}{2}\mu^2 t}\D{t},~t>0.
\]
\end{comment}

Theorem \ref{thm:last_zero} has a dual formulation in terms of the first zero after a fixed time which we state below as Theorem \ref{thm:first_zero}. We prove this directly and also show in Section \ref{sec:time_inversion} that either Theorem \ref{thm:last_zero} or Theorem \ref{thm:first_zero} can be deduced from the other using the time inversion property.
\begin{thm}\label{thm:first_zero}
Let $d_t=\inf\{s\geq t:X_s=0\}$ be the first zero after time $t$ of a Bessel process with dimension $\delta\in(0,2)$ and no drift starting at $x>0$. Put $\alpha=1-\frac{\delta}{2}$ and let $A_\alpha$ and $E_x$ be independent $\mathrm{Beta}(\alpha,1-\alpha)$ and $\mathrm{Exp}(\frac{1}{2}x^2)$ random variables, respectively. Then we have the independent factorization
\[
d_t\text{ under }\,\mathbb{P}^\delta_x\,\stackrel{\mathcal{L}}{=}\,\max\left\{t,\frac{1}{E_x}\right\}\,\frac{1}{A_\alpha}.
\]
\end{thm}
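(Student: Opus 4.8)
The plan is to work with the squared process and exploit the additivity property of squared Bessel processes recalled in Section~\ref{sec:additivity}, in the same spirit as the proof of Theorem~\ref{thm:last_zero} but with driftless additivity playing the role of Theorem~\ref{thm:decomp}. Since the zero set of $X$ coincides with that of $X^2$, and by \eqref{eq:Bessel_square} the law of $X^2$ under $\mathbb{P}_x^\delta$ is $\mathbb{Q}_{x^2}^\delta$, I would first decompose
\[
\left(X_t:t\ge 0;\,\mathbb{Q}_{x^2}^\delta\right)\stackrel{\mathcal{L}}{=}\left(Y_t+Z_t:t\ge 0;\,\mathbb{Q}_0^\delta(Y)\times\mathbb{Q}_{x^2}^0(Z)\right),
\]
where $Y$ is a driftless squared Bessel process of dimension $\delta$ started at $0$ and $Z$ is an independent squared Bessel process of dimension $0$ started at $x^2$. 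The dimension-$0$ component $Z$ is a nonnegative martingale that is trapped at $0$ once it arrives there, at the time $\zeta:=\inf\{s:Z_s=0\}$. The standard hitting-time law $\tfrac{a}{2T_0}\sim\mathrm{Gamma}(1-\tfrac{\delta}{2})$ for squared Bessel processes specializes at $\delta=0$ to $\tfrac{x^2}{2\zeta}\sim\mathrm{Exp}(1)$, equivalently $\zeta\stackrel{\mathcal{L}}{=}1/E_x$, which already accounts for the $1/E_x$ in the statement.

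Next I would read off the zero set. As $Y,Z\ge 0$, one has $Y+Z=0$ exactly when $Y=0$ and $Z=0$; since $\{Z=0\}=[\zeta,\infty)$, the zeros of the decomposed process are precisely the zeros of $Y$ lying in $[\zeta,\infty)$. Consequently
\[
d_t\stackrel{\mathcal{L}}{=}d^Y_{\max\{t,\zeta\}},\qquad d^Y_u:=\inf\{s\ge u:Y_s=0\},
\]
so the problem reduces to the first zero after a fixed time of the driftless, self-similar process $Y$ started at $0$. Bessel scaling gives $d^Y_u\stackrel{\mathcal{L}}{=}u\,d^Y_1$ with the ratio $d^Y_u/u$ having a law that does not depend on $u$; since $\zeta$ is a functional of $Z$ and hence independent of $Y$, conditioning on $\zeta$ shows that $d^Y_{\max\{t,\zeta\}}/\max\{t,\zeta\}$ is independent of $\max\{t,\zeta\}$ and distributed as $d^Y_1$. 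This is exactly the independence mechanism behind Theorem~\ref{thm:last_zero}, and it yields the independent factorization $d_t\stackrel{\mathcal{L}}{=}\max\{t,1/E_x\}\cdot d^Y_1$ after replacing $\zeta$ by its law $1/E_x$.

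It remains to identify $d^Y_1\stackrel{\mathcal{L}}{=}1/A_\alpha$, equivalently the dual arcsine law $u/d^Y_u\sim\mathrm{Beta}(\alpha,1-\alpha)$ for the first zero after time $u$, which I expect to be the main computational point. The zero set of $Y$ is the closed range of a stable subordinator of index $\alpha=1-\tfrac{\delta}{2}$, so the pair $(g^Y_u,d^Y_u)$ consisting of the last zero before and first zero after $u$ has the undershoot–overshoot joint density proportional to $v^{\alpha-1}(w-v)^{-1-\alpha}$ on $0<v<u<w$, obtained from the renewal and Lévy measures. Integrating out $v$ and passing to $r=u/w$ reduces the incomplete beta integral to its closed form $\tfrac{1}{\alpha}r^{\alpha}(1-r)^{-\alpha}$, which after the Jacobian factor $r^{-1}$ leaves exactly the $\mathrm{Beta}(\alpha,1-\alpha)$ density; this is the precise analogue of Lamperti's law \eqref{eq:Bessel_arcsine} for $g^Y_u/u$ and could equally be invoked as a preliminary result from Section~\ref{sec:preliminaries}. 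Combining the three ingredients proves the theorem. As a consistency check, and as developed in Section~\ref{sec:time_inversion}, the statement also follows from Theorem~\ref{thm:last_zero} via the time inversion property: under $\mathbb{P}_x^\delta$ the process $t\mapsto tX_{1/t}$ has law $\mathbb{P}_0^{\delta,x}$ and carries $d_t$ to $1/g_{1/t}$, so inverting $g_{1/t}\stackrel{\mathcal{L}}{=}\min\{1/t,E_x\}A_\alpha$ reproduces $\max\{t,1/E_x\}/A_\alpha$.
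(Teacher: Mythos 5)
Your proposal is correct and follows essentially the same route as the paper: decompose $\mathbb{Q}_{x^2}^\delta$ as $\mathbb{Q}_0^\delta*\mathbb{Q}_{x^2}^0$ via the additivity property, observe that absorption of the dimension-$0$ summand at $\zeta\stackrel{\mathcal{L}}{=}1/E_x$ reduces $d_t$ to the first zero of the driftless component after $\max\{t,\zeta\}$, and factor out $\max\{t,\zeta\}$ by scaling and independence. The only divergence is in identifying $d^Y_1\stackrel{\mathcal{L}}{=}1/A_\alpha$, which you obtain by a direct stable-subordinator overshoot computation while the paper gets it in one line from the duality relation \eqref{eq:dual_times} combined with Lamperti's law \eqref{eq:Bessel_arcsine} --- a shortcut you yourself note at the end.
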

\begin{rem}
By letting either $x\to 0$ or $t\to 0$, we see that $d_t$ under $\mathbb{P}^\delta_0$ is a shifted and scaled beta prime random variable while $\tau_0$ under $\mathbb{P}^\delta_x$ is an inverse gamma random variable.
\end{rem}

\section{Preliminary results}\label{sec:preliminaries}

\subsection{Time inversion property and duality}\label{sec:time_inversion}

Our proofs rely on some well-known properties of Bessel processes and Bessel bridges which we now recall, starting with the aforementioned time inversion property \cite[Theorem 2.1]{wide_sense}. For $\delta>0$ and $x,\mu\geq 0$, this states that 
\begin{equation}\label{eq:time_inversion}
\left(tX_\frac{1}{t}:t>0;\,\mathbb{P}_x^{\delta,\mu}\right)\stackrel{\mathcal{L}}{=}\left(X_t:t>0;\,\mathbb{P}_\mu^{\delta,x}\right).
\end{equation}
In other words, time inversion preserves dimension but swaps the drift and starting position of Bessel processes with drift. Since Bessel processes without drift satisfy the usual Brownian scaling property, we get from \eqref{eq:time_inversion} and \eqref{eq:limit_speed} that for any $\delta,c>0$
\begin{equation}\label{eq:drift_scaling}
\left(cX_\frac{t}{c^2}:t\geq 0;\,\mathbb{P}_0^{\delta,\mu}\right)\stackrel{\mathcal{L}}{=}\left(X_t:t\geq 0;\,\mathbb{P}_0^{\delta,\frac{\mu}{c}}\right).
\end{equation}

The time inversion property \eqref{eq:time_inversion} can also be used to establish a duality relation between $g_t$ and $d_t$. More precisely, for any $t>0$ and $x\geq 0$ with $\delta\in (0,2)$ we have
\begin{align}
d_t\text{ under }\,\mathbb{P}^\delta_x\,&\stackrel{\mathcal{L}}{=}\,\inf\left\{s\geq t:s X_\frac{1}{s}=0\right\}\text{ under }\,\mathbb{P}^{\delta,x}_0 \nonumber\\
&=\,\frac{1}{\sup\left\{s\leq\frac{1}{t}:X_s=0\right\}}\text{ under }\,\mathbb{P}^{\delta,x}_0 \nonumber\\
&=\,\frac{1}{g_\frac{1}{t}}\text{ under }\,\mathbb{P}^{\delta,x}_0.\label{eq:dual_times}
\end{align}
Using \eqref{eq:dual_times}, it is easy to deduce Theorem \ref{thm:first_zero} from Theorem \ref{thm:last_zero} and vice versa.

\subsection{Bessel bridges with \texorpdfstring{$\delta>0$}{d>0}}

Next we introduce the notation $\mathbb{P}_{x\to y}^{\delta,T}$ for the law of a Bessel process with dimension $\delta>0$ which starts at $x\geq 0$ and is conditioned to be at $y\geq 0$ at time $T>0$, that is, the law of a Bessel bridge with dimension $\delta>0$ from $x$ to $y$ of length $T$. While the appearance of an arrow in the notation for bridge laws should prevent mistaking the $T$ for drift, we also use $B$ for the coordinate process of a bridge instead of $X$ to further the distinction.

The following lemma is a consequence of the time inversion property \eqref{eq:time_inversion} and shows how Bessel processes with drift and Bessel bridges are related to each other through a space-time transformation.
\begin{lem}\label{lem:bridges}
Suppose $\delta,T>0$ and $x,\mu\geq 0$. Then we have:
\begin{enumerate}[label=\emph{\roman*.}]
\item $\displaystyle \left(B_t:0\leq t<T;\,\mathbb{P}_{0\to \mu}^{\delta,T}\right)\stackrel{\mathcal{L}}{=}\left((T-t)X_\frac{t}{T(T-t)}:0\leq t<T;\,\mathbb{P}_0^{\delta,\mu}\right),$
\item $\displaystyle \left(X_t:t\geq 0;\,\mathbb{P}_0^{\delta,\mu}\right)\stackrel{\mathcal{L}}{=}\left(\frac{1+Tt}{T}B_\frac{T^2 t}{1+Tt}:t\geq 0;\,\mathbb{P}_{0\to \mu}^{\delta,T}\right)$.
\end{enumerate}
\end{lem}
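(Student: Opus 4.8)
The plan is to prove (i) and then read off (ii) for free. First I note that the deterministic substitution $\tau=T^2t/(1+Tt)$, with inverse $t=\tau/(T(T-\tau))$, is a bijection between $[0,\infty)$ and $[0,T)$ under which $T-\tau=T/(1+Tt)$; together with the accompanying positive spatial rescaling, this makes the two displayed maps mutually inverse measurable bijections on path space. Consequently the equalities in law (i) and (ii) are equivalent (the pushforward of $\mathbb{P}_0^{\delta,\mu}$ under the first map equals $\mathbb{P}_{0\to\mu}^{\delta,T}$ if and only if the pushforward of $\mathbb{P}_{0\to\mu}^{\delta,T}$ under the inverse map equals $\mathbb{P}_0^{\delta,\mu}$), so it suffices to establish (i).

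For (i) the idea is to use the time inversion property \eqref{eq:time_inversion} to trade the drift $\mu$ for a shifted starting point, thereby reducing matters to the classical driftless relation between a Bessel process and a Bessel bridge. Writing $Z_t=(T-t)X_{t/(T(T-t))}$ with $X$ of law $\mathbb{P}_0^{\delta,\mu}$, I set $Y_w=wX_{1/w}$, so that by \eqref{eq:time_inversion} the process $Y$ has law $\mathbb{P}_\mu^{\delta,0}$, i.e. it is a driftless Bessel process of dimension $\delta$ started at $\mu$. Since time inversion is an involution, $X_s=sY_{1/s}$, and a short computation gives $Z_t=\frac{t}{T}Y_{T(T-t)/t}$. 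Putting $\sigma=T-t$, this is precisely $\hat Z_{T-t}$, where $\hat Z_\sigma:=\frac{T-\sigma}{T}Y_{T\sigma/(T-\sigma)}$ for $\sigma\in[0,T)$.

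The next step is to recognize $\hat Z$. The map $Y\mapsto\hat Z$ is the classical space-time transformation sending a driftless Bessel process of dimension $\delta$ started at $\mu$ to a driftless Bessel bridge of dimension $\delta$ from $\mu$ to $0$ of length $T$ (the Bessel analogue of the transformation $(1-\sigma)(\cdot)_{\sigma/(1-\sigma)}$ taking a process started at a point to a bridge terminating at $0$; the general $T$ and general starting point follow from the case $T=1$ by Brownian scaling). Thus $\hat Z$ has law $\mathbb{P}_{\mu\to 0}^{\delta,T}$, and since $Z_t=\hat Z_{T-t}$, the time-reversal symmetry of Bessel bridges, namely that $\mathbb{P}_{\mu\to 0}^{\delta,T}$ reversed equals $\mathbb{P}_{0\to\mu}^{\delta,T}$, yields that $Z$ has law $\mathbb{P}_{0\to\mu}^{\delta,T}$. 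This is exactly (i), and (ii) then follows by solving $Z$ for $X$ through the inverse substitution of the first paragraph.

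The endpoint behavior serves as a useful sanity check: as $t\to T$ one has $t/(T(T-t))\to\infty$ and $X_s/s\to\mu$ by \eqref{eq:limit_speed}, so $Z_t\to\mu$ while $Z_0=0$, confirming that the drift $\mu$ is precisely what relocates the terminal value of the bridge from $0$ to $\mu$. I expect the main obstacle to be the bookkeeping involved in composing the three deterministic reparametrizations (time inversion, the bridge time-change, and the terminal reversal) so that they collapse to exactly the stated map, together with citing the classical driftless process-to-bridge relation and bridge time-reversal in a form valid for all $\delta>0$, where conditioning on $\{Y_T=0\}$ must be read through the Bessel transition density rather than naively.
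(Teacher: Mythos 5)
Your argument is correct, and the computations check out (the two space--time maps are indeed mutually inverse, $Z_t=\tfrac{t}{T}Y_{T(T-t)/t}$ with $Y$ the time-inverted process, and the endpoint limits behave as you say), but it reaches part i.\ by a different route than the paper. The paper's proof is essentially a one-step citation: it quotes Theorem 5.8 of \cite{infinite_divis}, which already represents the bridge $\mathbb{P}_{0\to\mu}^{\delta,T}$ as the transformed path $\frac{T-t}{T}X_{Tt/(T-t)}$ of a Bessel process \emph{with drift} $\mu/T$ started at $0$, and then applies the drift-scaling relation \eqref{eq:drift_scaling} to convert the drift $\mu/T$ into $\mu$; part ii.\ is then obtained by the same inverse substitution you describe. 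You instead use the time-inversion property \eqref{eq:time_inversion} to trade the drift for a starting point, reducing to the \emph{driftless} process-to-bridge transformation $\sigma\mapsto\frac{T-\sigma}{T}Y_{T\sigma/(T-\sigma)}$ followed by time reversal of the bridge. What your route buys is that the only bridge-theoretic input is the classical driftless representation of a Bessel bridge terminating at $0$ (plus the standard reversibility of Bessel bridges), rather than the drifted-bridge identity of \cite{infinite_divis}; what it costs is the extra bookkeeping of two additional reparametrizations and the need to justify the driftless representation for a general starting point $\mu\geq 0$ and all $\delta>0$, a point you rightly flag since conditioning on terminal value $0$ must be read through the transition densities. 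Both arguments ultimately rest on time inversion (the paper's \eqref{eq:drift_scaling} is itself derived from \eqref{eq:time_inversion}), so the difference is one of which known bridge identity is taken as the external input, not of underlying mechanism.
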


\begin{proof}
We prove part $\mathrm{i.}$ by starting from Theorem 5.8 in \cite{infinite_divis} and using \eqref{eq:drift_scaling}
\begin{align*}
\left(B_t:0\leq t<T;\,\mathbb{P}_{0\to \mu}^{\delta,T}\right)&\stackrel{\mathcal{L}}{=}\left(\frac{T-t}{T}X_\frac{Tt}{T-t}:0\leq t<T;\,\mathbb{P}_0^{\delta,\frac{\mu}{T}}\right)\\
&\stackrel{\mathcal{L}}{=}\left((T-t)X_\frac{t}{T(T-t)}:0\leq t<T;\,\mathbb{P}_0^{\delta,\mu}\right).
\end{align*}

Part $\mathrm{ii.}$ can be deduced from part $\mathrm{i.}$ via
\begin{align*}
&\left(\frac{1+Tt}{T}B_\frac{T^2 t}{1+Tt}:t\geq 0;\,\mathbb{P}_{0\to \mu}^{\delta,T}\right)\\
&~~\stackrel{\mathcal{L}}{=}\left(\frac{1+Tt}{T}\left(T-\frac{T^2 t}{1+Tt}\right)X_\frac{\frac{T^2 t}{1+Tt}}{T\left(T-\frac{T^2 t}{1+Tt}\right)}:t\geq 0;\,\mathbb{P}_0^{\delta,\mu}\right)\\
&~~=\left(X_t:t\geq 0;\,\mathbb{P}_0^{\delta,\mu}\right).
\end{align*}
\end{proof}

\subsection{Bessel bridges with \texorpdfstring{$\delta=0$}{d=0}}

It is well known that $0$ is an absorbing state for the Bessel process of dimension $0$ and this will have implications for the corresponding bridges. Before defining Bessel bridges in this case, we first recall the particularly simple distribution function of the absorption time. More generally, we let $\tau_y=\inf\{t>0:X_t=y\}$ denote the first hitting time of $y\in\mathbb{R}$ by the coordinate process. Then from \cite[Corollary XI.1.4]{Revuz_Yor} we have 
\begin{equation}\label{eq:absorb_dist}
\mathbb{P}_x^0(\tau_0\leq t)=e^{-\frac{x^2}{2t}},~t>0.
\end{equation}
From this it follows that 
\begin{equation}\label{eq:absorption}
\tau_0\text{ under }\mathbb{P}_x^0\,\stackrel{\mathcal{L}}{=}\, \frac{1}{E_x}
\end{equation}
where $E_x$ is an $\mathrm{Exp}(\frac{1}{2}x^2)$ random variable.

Now we expound on the subtlety in the definition of Bessel bridges of dimension $0$ that stems from $0$ being an absorbing state for the underlying unconditioned process, see also \cite[Section 5.3]{Bessel_decomp}. When $x>0$, the bridge law $\mathbb{P}_{x\to 0}^{0,T}$ results from conditioning a $0$-dimensional Bessel path of duration $T$ starting at $x$ to be absorbed before time $T$. Note that this conditioned absorption time will almost surely occur strictly between $0$ and $T$. The bridge law $\mathbb{P}_{0\to x}^{0,T}$ is simply the law of the time reversed bridge under $\mathbb{P}_{x\to 0}^{0,T}$, namely 
\[
\left(B_t:0\leq t\leq T;\,\mathbb{P}_{0\to x}^{0,T}\right)\stackrel{\mathcal{L}}{=}\left(B_{T-t}:0\leq t\leq T;\,\mathbb{P}_{x\to 0}^{0,T}\right).
\]
When both $x,y>0$, the bridge law $\mathbb{P}_{x\to y}^{0,T}$ is defined just as in the $\delta>0$ case. When $x=y=0$, the law $\mathbb{P}_{0\to 0}^{0,T}$ is degenerate and assigns probability $1$ to the constant $0$ path. 

By also conditioning on the exact time of absorption, we can relate a Bessel bridge of dimension $0$ with a bridge of dimension $4$. More precisely, if $x>0$ and $0<S\leq T$, then we have 
\begin{equation*}
\left(B_t:0\leq t\leq S;\,\mathbb{P}_{x\to 0}^{0,T}\middle|\tau_0=S\right)\stackrel{\mathcal{L}}{=}\left(B_t:0\leq t\leq S;\,\mathbb{P}_{x\to 0}^{4,S}\right).
\end{equation*}
From this it follows by time reversal that 
\begin{equation}\label{eq:0_4_bridge}
\left(B_{t+T-S}:0\leq t\leq S;\,\mathbb{P}_{0\to x}^{0,T}\middle|g_T=T-S\right)\stackrel{\mathcal{L}}{=}\left(B_t:0\leq t\leq S;\,\mathbb{P}_{0\to x}^{4,S}\right).
\end{equation}

Lemma \ref{lem:bridges} does not apply when $\delta=0$ so we need another result for this case. The following lemma serves this purpose by using the same space-time transformation to connect a Bessel bridge of dimension $0$ with the waiting Bessel process of dimension $4$ which appears in Theorem \ref{thm:decomp}.
\begin{lem}\label{lem:zero_bridge}
For $\mu>0$, let $E_\mu$ be an independent $\mathrm{Exp}(\frac{1}{2}\mu^2)$ random variable. Then
\[
\left(\left(1+t\right) B_\frac{t}{1+t}:t\geq 0;\,\mathbb{P}_{0\to \mu}^{0,1}\right)\stackrel{\mathcal{L}}{=}\left(X_{(t-E_\mu)^+}:t\geq 0;\,\mathbb{P}_0^{4,\mu}\right).
\]
\end{lem}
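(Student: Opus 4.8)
The plan is to decompose the dimension-$0$ bridge $\mathbb{P}_{0\to\mu}^{0,1}$ into a flat piece on which $B$ rests at $0$ followed by an active piece on which $B$ travels from $0$ to $\mu$, and then to track how the space-time map $t\mapsto(1+t)B_{t/(1+t)}$ acts on each piece. My expectation is that the flat piece is turned into a waiting time of the correct exponential law and the active piece into a Bessel process of dimension $4$ with drift $\mu$, with the two emerging independent; together these give exactly the right-hand side of the statement, since $X_{(t-E_\mu)^+}$ under $\mathbb{P}_0^{4,\mu}$ is precisely a dimension-$4$ drift-$\mu$ Bessel process preceded by an independent exponential rest at $0$.

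First I would locate the last zero $g_1$ of the bridge. Since $\mathbb{P}_{0\to\mu}^{0,1}$ is by definition the time reversal of $\mathbb{P}_{\mu\to 0}^{0,1}$, and the latter is a dimension-$0$ path started at $\mu$ and absorbed at $0$ before time $1$, the complementary length $S:=1-g_1$ equals the absorption time $\tau_0$ of that reversed path. By \eqref{eq:absorption} this absorption time is $1/E_\mu$ conditioned on $\{\tau_0\leq 1\}=\{E_\mu\geq 1\}$. Under the space-time map the flat interval $[0,g_1]$ of the bridge becomes the interval $[0,W]$ on which the transformed process vanishes, and a short computation gives $W=1/S-1$. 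Computing the law of $W$ either directly from \eqref{eq:absorb_dist} or from the memorylessness of the exponential then shows that $W$ is $\mathrm{Exp}(\frac12\mu^2)$, matching $E_\mu$.

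Next I would handle the active piece. Conditionally on $g_1=1-S$, equation \eqref{eq:0_4_bridge} identifies the path of $B$ on $[g_1,1]$ as a dimension-$4$ bridge from $0$ to $\mu$ of length $S$, say $\beta\sim\mathbb{P}_{0\to\mu}^{4,S}$. Substituting $t=W+r$ into $t\mapsto(1+t)B_{t/(1+t)}$ and simplifying, I expect the transformed active piece to be exactly $\frac{1+Sr}{S}\,\beta_{S^2 r/(1+Sr)}$ for $r\geq 0$. This is precisely the space-time transformation appearing in part $\mathrm{ii.}$ of Lemma~\ref{lem:bridges} with $\delta=4$ and $T=S$, whose output is a Bessel process of dimension $4$ with drift $\mu$ started at $0$.

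The crux, which I expect to be the main obstacle, is the independence. The output law $\mathbb{P}_0^{4,\mu}$ produced by part $\mathrm{ii.}$ of Lemma~\ref{lem:bridges} does not depend on the bridge length $T=S$; hence, conditionally on every value of $S$ (equivalently of $W$), the transformed active piece has the same law $\mathbb{P}_0^{4,\mu}$, making it genuinely independent of the waiting time $W$. Combining the two pieces, the transformed bridge rests at $0$ for an $\mathrm{Exp}(\frac12\mu^2)$ time $W$ and then evolves as an independent $\mathbb{P}_0^{4,\mu}$ process, which is the law of $(X_{(t-E_\mu)^+}:t\geq 0)$ under $\mathbb{P}_0^{4,\mu}$. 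The only real care needed is bookkeeping the conditioning so that the exponential waiting time and the dimension-$4$ process come out independent; the space-time algebra itself is routine.
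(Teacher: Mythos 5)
Your proposal is correct and follows essentially the same route as the paper: split the dimension-$0$ bridge at its last zero via \eqref{eq:0_4_bridge}, push both pieces through the space-time map, recognize the active piece as $\mathbb{P}_0^{4,\mu}$ via part $\mathrm{ii.}$ of Lemma~\ref{lem:bridges} (with the independence coming from the fact that the output law does not depend on the bridge length), and identify the waiting time as $\mathrm{Exp}(\frac12\mu^2)$. The only cosmetic difference is that you obtain the law of the waiting time from \eqref{eq:absorption} and memorylessness, where the paper computes the distribution function of $g_1/(1-g_1)$ directly from \eqref{eq:absorb_dist}.
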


\begin{proof}
It follows from \eqref{eq:0_4_bridge} that the bridge $(B_t:0\leq t\leq 1)$ under $\mathbb{P}_{0\to \mu}^{0,1}$ can be split into two independent pieces by conditioning on $g_1$. Indeed, we can sample this bridge by first drawing $g_1$ under $\mathbb{P}_{0\to \mu}^{0,1}$ and then sampling a bridge under $\mathbb{P}_{0\to \mu}^{4,1-g_1}$ that is appended to a constant $0$ path of length $g_1$. More precisely, let $\gamma$ be an independent random variable distributed like $g_1$ under $\mathbb{P}_{0\to \mu}^{0,1}$. Then
\begin{equation*}
\left(B_t:0\leq t\leq 1;\,\mathbb{P}_{0\to \mu}^{0,1}\right)\stackrel{\mathcal{L}}{=}\left(B_{(t-\gamma)^+}:0\leq t\leq 1;\,\mathbb{P}_{0\to \mu}^{4,1-\gamma}\right).
\end{equation*}
Applying the applicable space-time transformation to both sides of this equality in law results in
\begin{equation}\label{eq:transform_split}
\left(\left(1+t\right) B_\frac{t}{1+t}:t\geq 0;\,\mathbb{P}_{0\to \mu}^{0,1}\right)\stackrel{\mathcal{L}}{=}\left((1+t)B_{\left(\frac{t}{1+t}-\gamma\right)^+}:t\geq 0;\,\mathbb{P}_{0\to \mu}^{4,1-\gamma}\right).
\end{equation}

Since $0<\gamma<1$ almost surely, notice that 
\begin{align*}
\left(\frac{t}{1+t}-\gamma\right)^+&=
\begin{cases}
0&0\leq t<\frac{\gamma}{1-\gamma}\\
\frac{t}{1+t}-\gamma&t\geq \frac{\gamma}{1-\gamma}
\end{cases}\\
&=\begin{cases}
0&0\leq t<\frac{\gamma}{1-\gamma}\\
\frac{(1-\gamma)(t-\gamma -\gamma t)}{(1-\gamma)(1+t)}&t\geq \frac{\gamma}{1-\gamma}
\end{cases}\\
&=\frac{(1-\gamma)^2\left(t-\frac{\gamma}{1-\gamma}\right)^+}{1+(1-\gamma)\left(t-\frac{\gamma}{1-\gamma}\right)^+}
\end{align*}
and similar calculations show that 
\[
1+t=\frac{1+(1-\gamma)\left(t-\frac{\gamma}{1-\gamma}\right)^+}{1-\gamma},~t\geq \frac{\gamma}{1-\gamma}.
\]
Writing $f(t)$ for the $(t-\frac{\gamma}{1-\gamma})^+$ which appears in both of these identities, now we can make the appropriate substitutions in the right-hand side of \eqref{eq:transform_split} and then appeal to part $\mathrm{ii.}$ of Lemma \ref{lem:bridges} to yield 
\begin{align}
&\left(\left(1+t\right) B_\frac{t}{1+t}:t\geq 0;\,\mathbb{P}_{0\to \mu}^{0,1}\right)\nonumber\\
&~~\stackrel{\mathcal{L}}{=}\Bigg(\left(\frac{1+(1-\gamma)f(t)}{1-\gamma}\right)B_\frac{(1-\gamma)^2 f(t)}{1+(1-\gamma)f(t)}:t\geq 0;\,\mathbb{P}_{0\to \mu}^{4,1-\gamma}\Bigg)\nonumber\\
&~~\stackrel{\mathcal{L}}{=}\left(X_{\left(t-\frac{\gamma}{1-\gamma}\right)^+}:t\geq 0;\,\mathbb{P}_0^{4,\mu}\right).\label{eq:holding}
\end{align}
 
It remains to identify the distribution of the $\frac{\gamma}{1-\gamma}$ appearing in \eqref{eq:holding}. By hypothesis, this random variable is distributed like $\frac{g_1}{1-g_1}$ under $\mathbb{P}_{0\to \mu}^{0,1}$ and is also independent of the Bessel process that appears in \eqref{eq:holding}. From \eqref{eq:absorb_dist}, it follows that $g_1$ under $\mathbb{P}_{0\to \mu}^{0,1}$ has distribution function 
\begin{align*}
\mathbb{P}_{0\to \mu}^{0,1}(g_1\leq t)&=\mathbb{P}_{\mu\to 0}^{0,1}(1-\tau_0\leq t)\\
&=1-e^{-\frac{\mu^2 t}{2(1-t)}}.
\end{align*}
Hence
\begin{align*}
\mathbb{P}_{0\to \mu}^{0,1}\left(\frac{g_1}{1-g_1}\leq t\right)&=\mathbb{P}_{0\to \mu}^{0,1}\left(g_1\leq\frac{t}{1+t}\right)\\
&=1-e^{-\frac{1}{2}\mu^2 t}.
\end{align*}
From this we deduce that the $\frac{\gamma}{1-\gamma}$ appearing in \eqref{eq:holding} is an independent $\mathrm{Exp}(\frac{1}{2}\mu^2)$ random variable and the proof is complete.
\end{proof}

\subsection{Additivity property}\label{sec:additivity}

Lastly, we recall the \emph{additivity property} of squared Bessel processes \cite[Theorem XI.1.2]{Revuz_Yor}. This property states that if $X$ and $X'$ are independent squared Bessel processes of dimensions $\delta,\delta'\geq 0$ starting from $x,x'\geq 0$, then their sum $(X_t+X'_t:t\geq 0)$ is a squared Bessel process of dimension $\delta+\delta'$ starting from $x+x'$. A more succinct statement of the additivity property is
\begin{equation}\label{eq:additivity}
\mathbb{Q}_x^\delta*\mathbb{Q}_{x'}^{\delta'}=\mathbb{Q}_{x+x'}^{\delta+\delta'}
\end{equation}
where we used $\mathbb{Q}_x^\delta*\mathbb{Q}_{x'}^{\delta'}$ to denote the law of the sum of independent processes with laws $\mathbb{Q}_x^\delta$ and $\mathbb{Q}_{x'}^{\delta'}$. The additivity property also applies to squared Bessel bridges and for $\delta,\delta'\geq 0$ and $x,x'\geq 0$ we have the statements
\begin{equation}\label{eq:bridge_additivity}
\begin{split}
\mathbb{Q}_{x\to 0}^{\delta,T}*\mathbb{Q}_{x'\to 0}^{\delta',T}&=\mathbb{Q}_{x+x'\to 0}^{\delta+\delta',T}\\
\\
\mathbb{Q}_{0\to x}^{\delta,T}*\mathbb{Q}_{0\to x'}^{\delta',T}&=\mathbb{Q}_{0\to x+x'}^{\delta+\delta',T}.
\end{split}
\end{equation}
The analogous result for bridges with general starting and ending points is more complicated, see \cite[Theorem 5.8]{Bessel_decomp}.

\section{Proofs of the main theorems}\label{sec:proofs}

\subsection{Proof of Theorem \ref{thm:decomp}}

\begin{proof}[Proof of Theorem \ref{thm:decomp}]
We apply part $\mathrm{ii.}$ of Lemma \ref{lem:bridges}, the additivity property for bridges \eqref{eq:bridge_additivity}, then part $\mathrm{ii.}$ of Lemma \ref{lem:bridges} and Lemma \ref{lem:zero_bridge} to write
\begin{align*}
&\left(X_t^2:t\geq 0;\,\mathbb{P}_0^{\delta,\mu}\right)\\
&~~\stackrel{\mathcal{L}}{=}\left(\left(1+t\right)^2 B_\frac{t}{1+t}^2:t\geq 0;\,\mathbb{P}_{0\to \mu}^{\delta,1}\right)\\
&~~\stackrel{\mathcal{L}}{=}\left(\left((1+t)B_\frac{t}{1+t}\right)^2+\left((1+t)B_\frac{t}{1+t}'\right)^2:t\geq 0;\,\mathbb{P}_{0\to 0}^{\delta,1}(B)\times\mathbb{P}_{0\to \mu}^{0,1}(B')\right)\\
&~~\stackrel{\mathcal{L}}{=}\left(X_t^2+X_{\left(t-E_\mu\right)^+}'^2:t\geq 0;\,\mathbb{P}_0^\delta(X)\times\mathbb{P}_0^{4,\mu}(X')\right).
\end{align*}
In light of \eqref{eq:Bessel_square}, this proves the theorem.
\end{proof}

\subsection{Proof of Theorem \ref{thm:last_zero}}\label{sec:last_zero}

Arguably, the most direct way to analyze the distribution of $g_t$ under $\mathbb{P}_0^{\delta,\mu}$ is via the absolute continuity relation \eqref{eq:h_transform} using the joint law of $g_t$ and $X_t$ under $\mathbb{P}_0^\delta$. Deriving this joint law is relatively straightforward. The $g_t$ marginal is already known from Lamperti's arcsine law \eqref{eq:Bessel_arcsine} and the conditional law of $X_t$ can be deduced using the bridge-meander path decomposition whereby conditioning on $g_t$, the path $(X_s:0\leq s\leq t)$ under $\mathbb{P}_0^\delta$ splits into a $\delta$-dimensional Bessel bridge of length $g_t$ and an independent $\delta$-dimensional Bessel meander of length $t-g_t$. An explicit joint density follows from the Imhof relation for Bessel meanders \cite[Section 3.6]{aspects} and the transition density for Bessel processes \cite[Section XI.1]{Revuz_Yor}. Now the joint density of $g_t$ and $X_t$ under $\mathbb{P}_0^{\delta,\mu}$ can be read off from \eqref{eq:h_transform}.

However, one major drawback of using the above outlined approach is that onerous computations involving modified Bessel functions are necessary in order to compute the marginal density of $g_t$ under $\mathbb{P}_0^{\delta,\mu}$ and prove that it coincides with that of $\min\{t,E_\mu\}A_\alpha$. Indeed, even in the Brownian motion case where Bessel functions are absent, tedious calculations are required to verify \eqref{eq:ESG} via this approach, see \cite[Section 2]{Iafrate}. In fact, it might even be worthwhile to reverse this line of reasoning and see if Theorem \ref{thm:last_zero} leads to any new integral identities for modified Bessel functions, though we don't pursue this question here. By contrast, the method of proof we employ below completely avoids the computation issue by appealing to Theorem \ref{thm:decomp} instead.

\begin{proof}[Proof of Theorem \ref{thm:last_zero}]
Since $X'$ under $\mathbb{Q}_0^{4,\mu}$ never returns to $0$ once it starts, we can use the additive decomposition from Theorem \ref{thm:decomp} to write
\begin{align}
&\sup\{s\leq t:X_s=0\}\text{ under }\,\mathbb{Q}^{\delta,\mu}_0\nonumber\\
&~~\stackrel{\mathcal{L}}{=}\sup\{s\leq t:X_s+X_{(s-E_\mu)^+}'=0\}\text{ under }\,\mathbb{Q}_0^\delta(X)\times\mathbb{Q}_0^{4,\mu}(X')\nonumber\\
&~~\stackrel{\mathcal{L}}{=}\sup\big\{s\leq\min\{t,\,E_\mu\}:X_s=0\big\}\text{ under }\,\mathbb{Q}_0^\delta.\label{eq:zero_split}
\end{align}
We can use the independence of $E_\mu$ and $X$ along with Bessel scaling to factor out the $\min\{t,\,E_\mu\}$ from inside the $\sup$ appearing in \eqref{eq:zero_split}. Together with the fact that the zeros of a process and its square are the same, this allows us to conclude that 
\begin{align*}
g_t\text{ under }\,\mathbb{P}^{\delta,\mu}_0\,&\stackrel{\mathcal{L}}{=}\,\min\{t,\,E_\mu\}\,\sup\{s\leq 1:X_s=0\}\text{ under }\,\mathbb{Q}_0^\delta\\
&\stackrel{\mathcal{L}}{=}\,\min\{t,E_\mu\}\, g_1\text{ under }\,\mathbb{P}_0^\delta.
\end{align*}
Now the desired result follows from Lamperti's arcsine law \eqref{eq:Bessel_arcsine}.
\end{proof}

\subsection{Proof of Theorem \ref{thm:first_zero}}\label{sec:first_zero}

As mentioned in Section \ref{sec:time_inversion}, Theorem \ref{thm:first_zero} follows from a combination of Theorem \ref{thm:last_zero} and the duality relation \eqref{eq:dual_times}. However, here we opt for a more direct proof that uses the additivity property for squared Bessel processes without drift from Section \ref{sec:additivity}.

\begin{proof}[Proof of Theorem \ref{thm:first_zero}]
Since $0$ is an absorbing state for $X'$ under $\mathbb{Q}_{x^2}^\delta$, we can use the additivity property \eqref{eq:additivity} to write
\begin{align}
&\inf\{s\geq t:X_s=0\}\text{ under }\,\mathbb{Q}^\delta_{x^2}\nonumber\\
&~~\stackrel{\mathcal{L}}{=}\inf\{s\geq t:X_s+X_s'=0\}\text{ under }\,\mathbb{Q}_0^\delta(X)\times\mathbb{Q}_{x^2}^0(X')\nonumber\\
&~~\stackrel{\mathcal{L}}{=}\inf\big\{s\geq\max\{t,\,\tau_0'\}:X_s=0\big\}\text{ under }\,\mathbb{Q}_0^\delta(X)\times\mathbb{Q}_{x^2}^0(X').\label{eq:first_split}
\end{align}
The independence of $\tau_0'$ and $X$ allows us to apply Bessel scaling to \eqref{eq:first_split}, thereby factoring out the $\max\{t,\,\tau_0'\}$ from inside the $\inf$. In conjunction with \eqref{eq:Bessel_square} and \eqref{eq:absorption}, this implies that
\begin{align}
d_t\text{ under }\,\mathbb{P}^\delta_x\,&\stackrel{\mathcal{L}}{=}\,\max\{t,\,\tau_0'\}\,\inf\{s\geq 1:X_s=0\}\text{ under }\,\mathbb{Q}_0^\delta(X)\times\mathbb{Q}_{x^2}^0(X')\nonumber\\
&\stackrel{\mathcal{L}}{=}\,\max\left\{t,\,\frac{1}{E_x}\right\}\,d_1\text{ under }\,\mathbb{P}_0^\delta.\label{eq:first_law}
\end{align}
Now we can use \eqref{eq:dual_times} and \eqref{eq:Bessel_arcsine} to rewrite \eqref{eq:first_law} as
\[
d_t\text{ under }\,\mathbb{P}^\delta_x\,\stackrel{\mathcal{L}}{=}\,\max\left\{t,\frac{1}{E_x}\right\}\,\frac{1}{A_\alpha}
\]
which completes the proof.
\end{proof}

\begin{acknowledgments}
The author would like to thank Jim Pitman for providing useful comments on an earlier draft.
\end{acknowledgments}

\bibliography{last_zero}

\begin{thebibliography}{AG{\.{Z}}15}

\bibitem[AG{\.{Z}}15]{inversions}
L.~Alili, P.~Graczyk, and T.~{\.{Z}}ak.
\newblock On inversions and {D}oob {$h$}-transforms of linear diffusions.
\newblock In {\em In memoriam {M}arc {Y}or---{S}\'{e}minaire de
  {P}robabilit\'{e}s {XLVII}}, volume 2137 of {\em Lecture Notes in Math.},
  pages 107--126. Springer, Cham, 2015.

\bibitem[esg]{MathOverflow}
esg.
\newblock Arcsine law for brownian motion with drift.
\newblock MathOverflow.
\newblock URL:https://mathoverflow.net/q/223594 (version: 2016-06-09).

\bibitem[IO20]{Iafrate}
F.~Iafrate and E.~Orsingher.
\newblock The last zero-crossing of an iterated brownian motion with drift.
\newblock {\em Stochastics}, 92(3):356--378, 2020.

\bibitem[JW13]{hyper_Bessel}
J.~Jakubowski and M.~Wi\'{s}niewolski.
\newblock On hyperbolic {B}essel processes and beyond.
\newblock {\em Bernoulli}, 19(5B):2437--2454, 2013.

\bibitem[Lam62]{Lamperti_arcsine}
J.~Lamperti.
\newblock An invariance principle in renewal theory.
\newblock {\em Ann. Math. Statist.}, 33:685--696, 1962.

\bibitem[Law08]{Lawi}
S.~Lawi.
\newblock Towards a characterization of {M}arkov processes enjoying the
  time-inversion property.
\newblock {\em J. Theoret. Probab.}, 21(1):144--168, 2008.

\bibitem[L{\'{e}}v39]{Levy}
P.~L{\'{e}}vy.
\newblock Sur certains processus stochastiques homog\`enes.
\newblock {\em Compositio Math.}, 7:283--339, 1939.

\bibitem[Lin04]{Linetsky}
V.~Linetsky.
\newblock The spectral representation of {B}essel processes with constant
  drift: applications in queueing and finance.
\newblock {\em J. Appl. Probab.}, 41(2):327--344, 2004.

\bibitem[MY08]{aspects}
R.~Mansuy and M.~Yor.
\newblock {\em Aspects of {B}rownian motion}.
\newblock Universitext. Springer-Verlag, Berlin, 2008.

\bibitem[Pin95]{Pinsky}
R.~G. Pinsky.
\newblock {\em Positive harmonic functions and diffusion}, volume~45 of {\em
  Cambridge Studies in Advanced Mathematics}.
\newblock Cambridge University Press, Cambridge, 1995.

\bibitem[Pit18]{Pitman_arcsine}
J.~Pitman.
\newblock Random weighted averages, partition structures and generalized
  arcsine laws.
\newblock arXiv:1804.07896, 2018.

\bibitem[PW18]{Pitman_squared}
J.~Pitman and M.~Winkel.
\newblock Squared {B}essel processes of positive and negative dimension
  embedded in {B}rownian local times.
\newblock {\em Electron. Commun. Probab.}, 23:Paper No. 74, 13, 2018.

\bibitem[PY81]{infinite_divis}
J.~Pitman and M.~Yor.
\newblock Bessel processes and infinitely divisible laws.
\newblock In {\em Stochastic integrals ({P}roc. {S}ympos., {U}niv. {D}urham,
  {D}urham, 1980)}, volume 851 of {\em Lecture Notes in Math.}, pages 285--370.
  Springer, Berlin, 1981.

\bibitem[PY82]{Bessel_decomp}
J.~Pitman and M.~Yor.
\newblock A decomposition of {B}essel bridges.
\newblock {\em Z. Wahrsch. Verw. Gebiete}, 59(4):425--457, 1982.

\bibitem[PS{\.{Z}}15]{Bessel_sup}
A.~Py\'{c}, G.~Serafin, and T.~{\.{Z}}ak.
\newblock Supremum distribution of {B}essel process of drifting {B}rownian
  motion.
\newblock {\em Probab. Math. Statist.}, 35(2):201--222, 2015.

\bibitem[RY99]{Revuz_Yor}
D.~Revuz and M.~Yor.
\newblock {\em Continuous martingales and {B}rownian motion}, volume 293 of
  {\em Grundlehren der Mathematischen Wissenschaften [Fundamental Principles of
  Mathematical Sciences]}.
\newblock Springer-Verlag, Berlin, third edition, 1999.

\bibitem[RP81]{Markov_functions}
L.~C.~G. Rogers and J.~W. Pitman.
\newblock Markov functions.
\newblock {\em Ann. Probab.}, 9(4):573--582, 1981.

\bibitem[SGS17]{ESG}
E.~Schulte-Geers and W.~Stadje.
\newblock Small drift limit theorems for random walks.
\newblock {\em J. Appl. Probab.}, 54(1):199--212, 2017.

\bibitem[Wat75]{wide_sense}
S.~Watanabe.
\newblock On time inversion of one-dimensional diffusion processes.
\newblock {\em Z. Wahrscheinlichkeitstheorie und Verw. Gebiete}, 31:115--124,
  1974/75.

\bibitem[Wil74]{Williams_decomp}
D.~Williams.
\newblock Path decomposition and continuity of local time for one-dimensional
  diffusions. {I}.
\newblock {\em Proc. London Math. Soc. (3)}, 28:738--768, 1974.

\bibitem[Yor84]{naive_drift}
M.~Yor.
\newblock On square-root boundaries for {B}essel processes, and pole-seeking
  {B}rownian motion.
\newblock In {\em Stochastic analysis and applications ({S}wansea, 1983)},
  volume 1095 of {\em Lecture Notes in Math.}, pages 100--107. Springer,
  Berlin, 1984.

\end{thebibliography}
\bibliographystyle{alphabbrev}

\end{document}